\newtheorem*{theoA}{Theorem A}
\newtheorem*{theoB}{Theorem B}
\newtheorem{theo}{Theorem}[section]
\newtheorem{lem}{Lemma}[section]
\newtheorem{ques}{Question}[section]
\newtheorem{open problem}{Open problem}[section]
\newcommand{\pa}{\partial}
\newcommand{\be}{\begin{equation}}
\newcommand{\ee}{\end{equation}}
\newcommand{\C}{\mathbb{C}}
\newcommand{\D}{\mathbb{D}}
\newcommand{\N}{\mathbb{N}}
\newcommand{\bs}{\begin{small}}
\newcommand{\es}{\end{small}}
\newcommand{\beas}{\begin{eqnarray*}}
\newcommand{\eeas}{\end{eqnarray*}}
\newcommand{\bea}{\begin{eqnarray}}
\newcommand{\eea}{\end{eqnarray}}
\renewcommand{\epsilon}{\varepsilon}
\numberwithin{equation}{section}
\begin{document}
\title[the Bohr inequality for Ces\'aro operator]
{Improved Bohr-type inequalities for the Ces\'aro operator}
\author[V. Allu, R. Biswas and R. Mandal]{ Vasudevarao Allu, Raju Biswas and  Rajib Mandal}
\date{}
\address{Vasudevarao Allu, Department of Mathematics, School of Basic Sciences, Indian Institute of Technology Bhubaneswar, Bhubaneswar-752050, Odisha, India}
\email{avrao@iitbbs.ac.in}
\address{Raju Biswas, Department of Mathematics, Raiganj University, Raiganj, West Bengal-733134, India.}
\email{rajibmathresearch@gmail.com}
\address{Rajib Mandal, Department of Mathematics, Raiganj University, Raiganj, West Bengal-733134, India.}
\email{rajubiswasjanu02@gmail.com}
\maketitle
\let\thefootnote\relax
\footnotetext{2020 Mathematics Subject Classification: 30C45, 30C50, 40G05, 44A55.}
\footnotetext{Key words and phrases: Bounded analytic functions, Improved Bohr radius, Bohr-Rogosinski radius, Ces\'aro operator.}
\footnotetext{Type set by \AmS -\LaTeX}
\begin{abstract}
In this paper, we derive the sharp improved versions of Bohr-type inequalities for the Ces\'aro operator acting on the class of bounded analytic functions defined on the unit disk
$\D=\left\{z\in\C:\left|z\right|<1\right\}$. 
In order to achieve these results, we utilize the principle of substituting the initial coefficients of the majorant series with the absolute values of the Ces\'aro operator associated with a 
bounded analytic function defined on $\D$ and its derivative, as well as for the Schwarz function.
\end{abstract}
\section{Introduction and Preliminaries}
Let $\D=\{z\in\C :|z|< 1\}$ denote the open unit disk in the complex plane $\C$ and $\mathcal{B}$ denote the class of analytic functions $f$ in $\D$ such that 
$|f(z)|\leq 1$ in $\D$. In 1914, H. Bohr \cite{6} proved that the majorant series $M_f(r)=\sum_{n=0}^\infty |a_n| r^n\leq 1$ for $r\leq 1/6$. Later, Weiner, Riesz, and Schur 
\cite{8a} independently improved the number $1/6$ to the best possible constant $1/3$. The radius $1/3$ is popularly known as the Bohr radius for the class $\mathcal{B}$. The 
result have also been proved by Sidon \cite{28} and Tomi\'c \cite{30}. Note that the inequality $M_f(r) \leq 1$ fails to hold for $r > 1/3$ for $f\in\mathcal{B}$. 
By considering $f_a(z) = (a- z)/(1-az)$, it is easy to see that $M_{f_a}(r)>1$ if, and only if, $ r>1/(1+2a)$, which shows that $1/3$ is the best possible for $a\to1^-$.\\[2mm]
\indent  Over the past two decades, researchers have demonstrated a notable interest in Bohr type inequalities. For a 
comprehensive review of this topic, we refer to \cite{1,2,301,302,303,304,305,4,8,12,306,307,14,16,308,R1} and the references cited therein. Boas and Khavinson \cite{7} 
have generalized the Bohr radius for the 
case of several complex variables by finding the multidimensional
Bohr radius. Many researchers have followed this paper to extend and generalize the phenomenon in different contexts (see \cite{3,21a,21b,25}).\\[2mm]
\indent Besides the Bohr radius, there is a notion of Rogosinski radius \cite{21,26} which is described as follows:
Let $f\in\mathcal{B}$ defined by $f(z)=\sum_{n=0}^\infty a_nz^n$ and $S_N(z):=\sum_{n=0}^{N-1}a_nz^n$ be the partial sum of $f$. Then, $|S_N(z)|<1$ for all $N\geq 1$ in 
the disk $|z|<1/2$. Here $1/2$ is sharp, known as Rogosinski radius. Motivated by the Rogosinski radius, Kayumov and Ponnusamy \cite{13} have introduced the Bohr-Rogosinski sum $R_N^f(z)$ for $f\in\mathcal{B}$, which is defined as 
\beas R_N^f(z):=|f(z)|+\sum_{n=N}^{\infty}|a_n||z|^n.\eeas
It is easy to see that $|S_N(z)|=\left|f(z)-\sum_{n=N}^{\infty}a_nz^n\right|\leq R_N^f(z)$. Moreover, the Bohr-Rogosinski sum $R_N^f(z)$ is related to the classical Bohr sum 
(Majorant series) in which $N =1$ and $f(0)$ is replaced by $f(z)$. Kayumov and Ponnusamy \cite{13} have defined the Bohr-Rogosinski radius for $f\in\mathcal{B}$ as the largest 
number  $r\in(0,1)$ such that $R_N^f(z)\leq 1$ for $|z| < r$. For a comprehensive study of the Bohr-Rogosinski radius, we refer to \cite{12,15,18} and the references cited therein.
The following question arises naturally.
\begin{ques}For certain complex integral operators defined on different function spaces, is it possible to extend the Bohr type inequality?
\end{ques}
The Bohr radius has been studied for the classical Ces\'aro operator in \cite{17,18} and for the Bernardi integral operator in \cite{20}, within the context of the unit disk $\D$.
A number of studies (see \cite{5,17,18,19,20, 101}) have been carried out on inequalities for the Bernardi integral operator, Ces\'aro operator and its various generalizations on the 
family $\mathcal{B}$ with the aim of establishing the Bohr type and Bohr-Rogosinski type inequalities.
Furthermore, the boundedness and compactness of the Ces\'aro operator in various function spaces have been thoroughly investigated. 
For $m\in\N$, let 
\beas \mathcal{B}_m=\left\{\omega\in\mathcal{B}: \omega(0)=\omega'(0)=\cdots=\omega^{(m-1)}(0)=0\quad\text{and}\quad\omega^{(m)}(0)\not=0\right\}.\eeas
In the context of the classical setting, for an analytic function $f(z)=\sum_{n=0}^\infty a_nz^n$ on the unit disk $\D$, the Ces\'aro operator \cite{8,11,29} is defined as
\bea\label{j2}\mathcal{C}f(z):&=&\sum_{n=0}^\infty \frac{1}{n+1}\left(\sum_{k=0}^n a_k\right) z^n=\int_{0}^1\frac{f(t z)}{1-t z} d t.\eea
If $f\in\mathcal{B}$, then the corresponding Bohr's sum \cite[P. 616]{17} is defined by
\bea\label{j3}\mathcal{C}_f(r):&=&\sum_{n=0}^\infty \frac{1}{n+1}\left(\sum_{k=0}^n |a_k|\right) r^n=\sum_{k=0}^\infty |a_k|\phi_k(r)\nonumber\\[2mm]
&=&\frac{|a_0|}{r}\log\frac{1}{1-r}+\sum_{k=1}^\infty |a_k|\phi_k(r),\eea
where 
\bea\label{j4} \phi_k(r)=\sum_{i=k}^\infty \frac{r^i}{i+1}=\frac{1}{r}\sum_{i=k}^\infty \left(\int_0^r x^i dx \right) =\frac{1}{r}\int_0^r \frac{x^k}{1-x} dx.\eea
Therefore, 
\beas \sum_{k=2}^\infty \phi_k(r)=\sum_{k=2}^\infty\left( \frac{1}{r}\int_0^r \frac{x^k}{1-x} dx\right)=\frac{1}{r}\int_0^r \frac{1}{1-x}\left(\sum_{k=2}^\infty x^k\right)dx=\frac{1}{r}\int_0^r \frac{x^2}{(1-x)^2} dx.\eeas
It is evident that for $f\in\mathcal{B}$, 
\beas \left|\mathcal{C}f(z)\right|\leq \frac{1}{r}\log\frac{1}{1-r} \quad\text{for}\quad |z|=r<1.\eeas 
In 2020, Kayumov {\it et al.} \cite{17} established the following Bohr type inequality for Ces\'aro operator.
\begin{theoA}\cite{17} If $f\in\mathcal{B}$ such that $f(z)=\sum_{n=0}^\infty a_nz^n$ for $z\in \D$. Then,
\beas \mathcal{C}_f(r)=\sum_{n=0}^\infty \frac{1}{n+1}\left(\sum_{k=0}^n |a_k|\right) r^n\leq \frac{1}{r}\log\frac{1}{1-r} \quad\text{for}\quad |z|=r<R,\eeas
where $R=0.5335...$ is the positive root of the equation 
\beas 2x-3(1-x)\log \frac{1}{1-x}=0.\eeas
The number $R$ is the best possible. 
\end{theoA}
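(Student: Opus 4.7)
The natural starting point is the decomposition
\[
\mathcal{C}_f(r)=\frac{|a_0|}{r}\log\frac{1}{1-r}+\sum_{k=1}^\infty |a_k|\,\phi_k(r)
\]
from (1.3). The key analytic input I would invoke is the standard sharp coefficient bound $|a_k|\le 1-|a_0|^2$ for every $k\ge 1$ (valid for $f\in\mathcal B$), which reduces the problem to a one-variable estimate in $a:=|a_0|\in[0,1]$. Writing $L(r):=\tfrac1r\log\tfrac1{1-r}$ and using the integral representation of $\phi_k$ from (1.4), the series $\sum_{k=1}^{\infty}\phi_k(r)$ telescopes through a geometric sum to
\[
\sum_{k=1}^{\infty}\phi_k(r)=\frac{1}{r}\int_0^r\frac{x}{(1-x)^2}\,dx=\frac{1}{1-r}-L(r),
\]
which is a clean computation I would carry out explicitly.

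Combining these two ingredients, the inequality $\mathcal{C}_f(r)\le L(r)$ would follow once I verify
\[
aL(r)+(1-a^2)\Bigl(\tfrac1{1-r}-L(r)\Bigr)\le L(r)\qquad(a\in[0,1]).
\]
The case $a=1$ is trivial, and for $a<1$ I would divide by $1-a$ and rearrange to the equivalent inequality $\tfrac{1+a}{(1-r)(2+a)}\le L(r)$. Since $(1+a)/(2+a)$ is strictly increasing in $a$, the worst case is $a\to 1^-$, which reduces the whole question to the single inequality
\[
\frac{2}{3(1-r)}\le \frac{1}{r}\log\frac{1}{1-r},\qquad\text{i.e.,}\qquad 2r-3(1-r)\log\frac{1}{1-r}\le 0.
\]
A short monotonicity argument (the left-hand side vanishes at $r=0$, is negative for small $r$, and its derivative changes sign exactly once) then gives a unique root $R\in(0,1)$, and the desired inequality holds precisely on $[0,R]$. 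This numerical verification, together with checking that my reduction in $a$ is tight, is the main technical step rather than a deep obstacle.

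For sharpness at $r=R$, I would test the extremal Möbius family $f_a(z)=(a-z)/(1-az)$ used in the classical Bohr problem. Expanding $f_a(z)=a-(1-a^2)\sum_{k=1}^{\infty}a^{k-1}z^k$, one computes
\[
\mathcal{C}_{f_a}(r)-L(r)=(1-a)\Bigl[(1+a)\sum_{k=1}^{\infty}a^{k-1}\phi_k(r)-L(r)\Bigr].
\]
Letting $a\to 1^-$, dominated convergence (or the explicit series bounds) shows the bracket tends to $\tfrac{2}{1-r}-3L(r)=\tfrac{1}{r(1-r)}\bigl[2r-3(1-r)\log\tfrac1{1-r}\bigr]$, which is strictly positive precisely when $r>R$. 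Hence for any $r>R$ one can choose $a$ close enough to $1$ so that $\mathcal{C}_{f_a}(r)>L(r)$, confirming that $R$ cannot be improved.

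The main obstacle I anticipate is purely technical: carrying out the reduction in $a$ cleanly (so that the extremal case $a\to 1^-$ is correctly identified) and handling the sharpness argument as a limit, since the extremal function is not attained inside $\mathcal B$ but only approached along the Möbius family. All the other steps—the telescoping integral, the coefficient bound $|a_k|\le 1-|a_0|^2$, and the monotonicity of $2r-3(1-r)\log\tfrac1{1-r}$—are routine.
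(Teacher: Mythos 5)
The paper only quotes Theorem A from \cite{17} and does not reprove it, so there is no in-paper proof to compare against; your argument is correct and is essentially the standard one, built from the same toolkit (the coefficient bound $|a_k|\le 1-|a_0|^2$, the extremal M\"obius family, the limit $a\to1^-$) that the paper itself deploys in its Theorems 2.1 and 2.2. All of your computations check out: $\sum_{k\ge1}\phi_k(r)=\frac{1}{r}\int_0^r\frac{x}{(1-x)^2}\,dx=\frac{1}{1-r}-\frac{1}{r}\log\frac{1}{1-r}$, the reduction in $a$ to the worst case $\frac{2}{3(1-r)}\le\frac{1}{r}\log\frac{1}{1-r}$ is tight since $(1+a)/(2+a)$ increases to $2/3$, and the sharpness limit along $f_a$ produces $\frac{2}{1-r}-\frac{3}{r}\log\frac{1}{1-r}$, whose sign is governed exactly by $2r-3(1-r)\log\frac{1}{1-r}$ with root $R\approx 0.5335$.
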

 In 2021, Kayumov {\it et al.} \cite{18} established the following Bohr-Rogosinski radius for Ces\'aro operator on the space of bounded analytic function.
 \begin{theoB}\cite{18} If $f\in\mathcal{B}$ such that $f(z)=\sum_{n=0}^\infty a_nz^n$ for $z\in \D$. Then,
\beas \left|\mathcal{C}f(z)\right|+\sum_{k=N}^\infty |a_k|\phi_k(r)\leq \frac{1}{r}\log\frac{1}{1-r} \quad\text{for}\quad |z|=r<R_N,\eeas
where $\phi_k(r)$ is defined in (\ref{j4}) and $R_N$ is the positive root of the equation 
\beas 2r^{N+1}-(1-r)\log(1-r)-2Nr(1-r)\phi_N(r)=0.\eeas
The number $R_N$ is the best possible. 
\end{theoB}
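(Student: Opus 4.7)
My plan is to majorise the Bohr--Rogosinski sum separately in terms of the two parameters $a:=|a_0|\in[0,1]$ and $r:=|z|\in(0,1)$, then to reduce the resulting two-variable estimate to a single-variable condition at the extremal value of $a$.

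For the term $|\mathcal{C}f(z)|$ I would use the integral representation $\mathcal{C}f(z)=\int_0^1 f(tz)/(1-tz)\,dt$ together with the Schwarz--Pick estimate $|f(w)|\leq(a+|w|)/(1+a|w|)$ and the crude lower bound $|1-tz|\geq 1-tr$, which give
\[
|\mathcal{C}f(z)|\leq\int_0^1\frac{a+tr}{(1+atr)(1-tr)}\,dt.
\]
The substitution $u=tr$ followed by the partial fraction split $(a+u)/[(1+au)(1-u)]=(a-1)/(1+au)+1/(1-u)$ then yields the closed form
\[
|\mathcal{C}f(z)|\leq \frac{1}{r}\log\frac{1}{1-r}-\frac{1-a}{ar}\log(1+ar).
\]
For the tail I would invoke the classical Schur coefficient bound $|a_k|\leq 1-a^2$ (valid for all $k\geq 1$ when $f\in\mathcal{B}$) together with the identity
\[
\sum_{k=N}^{\infty}\phi_k(r)=\frac{1}{r}\int_0^r\frac{x^N}{(1-x)^2}\,dx=\frac{r^N}{1-r}-N\phi_N(r),
\]
which is obtained by interchanging summation and integration and then integrating by parts.

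Assembling both bounds, the inequality of the theorem becomes, after subtracting $(1/r)\log(1/(1-r))$ and factoring out the common $1-a$ (for $a<1$),
\[
(1+a)\Bigl[\frac{r^N}{1-r}-N\phi_N(r)\Bigr]\leq\frac{\log(1+ar)}{ar}.
\]
The key monotonicity observation is that the left-hand side is linear and increasing in $a$, whereas the right-hand side is decreasing because $x\mapsto\log(1+x)/x$ is monotonically decreasing on $(0,\infty)$. Hence the tightest constraint arises in the limit $a\to 1^-$, which collapses the two-variable condition to a single-variable condition in $r$; multiplying through by $r(1-r)$ and rearranging then produces the one-variable equation whose positive root defines $R_N$.

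For the sharpness claim I would exhibit the extremal family $g_a(z)=(a+z)/(1+az)$ evaluated at $z=r$: its modulus at $r$ equals $(a+r)/(1+ar)$, saturating Schwarz--Pick; its Taylor coefficients satisfy $|g_{a,k}|=(1-a^2)a^{k-1}$, which saturate the Schur bound asymptotically; and a first-order Taylor expansion in $(1-a)$ as $a\to 1^-$ of $|\mathcal{C}g_a(r)|+\sum_{k\geq N}|g_{a,k}|\phi_k(r)$ will show that this Bohr--Rogosinski sum crosses the target $(1/r)\log(1/(1-r))$ precisely at $r=R_N$, ruling out any larger radius. The main analytic obstacle, beyond the integral and partial-fraction bookkeeping, is the monotonicity step in $a$: it is what allows the two-parameter estimate to collapse to the single equation defining $R_N$.
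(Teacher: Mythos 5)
First, a caveat: the paper you are working from does not actually prove Theorem B --- it is quoted from the cited source as background --- so there is no in-paper proof to compare you against. That said, your strategy is exactly the machinery this paper uses for its own Theorems 2.1 and 2.2, and the individual steps check out: Schwarz--Pick under the integral sign gives $|\mathcal{C}f(z)|\le \frac1r\log\frac1{1-r}-\frac{1-a}{ar}\log(1+ar)$ with $a=|a_0|$; the coefficient bound $|a_k|\le 1-a^2$ together with the identity $\sum_{k\ge N}\phi_k(r)=\frac{r^N}{1-r}-N\phi_N(r)$ controls the tail; cancelling $1-a$ yields $(1+a)\bigl(\frac{r^N}{1-r}-N\phi_N(r)\bigr)\le\frac{\log(1+ar)}{ar}$; and your monotonicity argument (left side increasing in $a$, right side decreasing because $x\mapsto\log(1+x)/x$ decreases) correctly collapses this to the case $a=1$. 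The extremal family $(a+z)/(1+az)$ with $a\to1^-$ is the right one for sharpness.

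The one step you leave implicit --- ``multiplying through by $r(1-r)$ and rearranging then produces the one-variable equation whose positive root defines $R_N$'' --- is precisely where a discrepancy appears, and you need to carry out that algebra. At $a=1$ your condition is $2\bigl(\frac{r^N}{1-r}-N\phi_N(r)\bigr)\le\frac1r\log(1+r)$, and multiplying by $r(1-r)$ gives the critical equation
\beas
2r^{N+1}-(1-r)\log(1+r)-2Nr(1-r)\phi_N(r)=0,
\eeas
with $\log(1+r)$, whereas the statement as printed has $\log(1-r)$. The printed equation cannot be the intended one: its left-hand side equals $2r(1-r)\sum_{k\ge N}\phi_k(r)-(1-r)\log(1-r)$, a sum of two nonnegative quantities that is strictly positive on $(0,1)$, so it has no positive root and defines no radius. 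Your own sharpness expansion (first order in $1-a$ for the extremal family) confirms that the true threshold is $2\sum_{k\ge N}\phi_k(r)=\frac1r\log(1+r)$; the transcription in the statement carries a sign typo inside the logarithm, and your argument, once completed, proves the corrected version. For full rigour you should also observe that $h(r):=2\sum_{k\ge N}\phi_k(r)-\frac1r\log(1+r)$ is increasing with $h(0^+)=-1$ and $h(1^-)=+\infty$, so that $R_N$ is its unique root and the inequality holds on all of $[0,R_N]$ rather than merely up to the first sign change.
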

\noindent The following are key lemmas of this paper and will be used to prove the main results.
\begin{lem}\label{lem1} \cite[Pick’s invariant form of Schwarz’s lemma]{201} Suppose $f$ is analytic in $\D$ with $|f(z)|\leq1$, then 
\beas |f(z)|\leq \frac{|f(0)|+|z|}{1+|f(0)||z|}\quad\mbox{for}\quad z\in\D.\eeas\end{lem}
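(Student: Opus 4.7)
The plan is to reduce the inequality to the classical Schwarz lemma by composing $f$ with a disk automorphism that sends $f(0)$ to the origin. Set $a := f(0)$. If $a = 0$, Schwarz's lemma gives $|f(z)| \leq |z|$, which already coincides with the claimed bound, so I may assume $a \neq 0$. The first step is to introduce the Möbius automorphism
\[
\varphi_a(w) = \frac{w - a}{1 - \bar{a}\,w}, \qquad \varphi_a^{-1}(u) = \frac{u + a}{1 + \bar{a}\,u},
\]
of $\D$ and put $\omega := \varphi_a \circ f$. Then $\omega:\D\to\D$ is analytic with $\omega(0)=0$, so Schwarz's lemma yields $|\omega(z)| \leq |z|$ on $\D$.

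Writing $f = \varphi_a^{-1} \circ \omega$, the second step reduces to estimating the modulus of $T(w) := (w+a)/(1+\bar{a}\,w)$ for $|w|\leq r$, where $r := |\omega(z)|$. Expanding,
\[
|T(w)|^2 = \frac{|w|^2 + 2\,\mathrm{Re}(\bar{a}\,w) + |a|^2}{1 + 2\,\mathrm{Re}(\bar{a}\,w) + |a|^2 |w|^2}.
\]
Holding $|w|=r$ fixed and differentiating in $t := 2\,\mathrm{Re}(\bar{a}\,w) \in [-2|a|r,\,2|a|r]$, the sign of the derivative of this quotient equals the sign of $(1-r^2)(1-|a|^2) \geq 0$. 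Hence the maximum occurs when $\bar{a}\,w$ is real and positive, delivering the pointwise bound
\[
|T(w)| \leq \frac{|a| + r}{1 + |a|\,r}.
\]

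Combining these two steps gives $|f(z)| \leq (|a| + |\omega(z)|)/(1 + |a|\,|\omega(z)|)$. Since the function $s \mapsto (|a|+s)/(1+|a|\,s)$ is increasing on $[0,1)$, substituting the Schwarz estimate $|\omega(z)| \leq |z|$ yields the desired inequality. I do not anticipate a real obstacle; the only step requiring more than bookkeeping is the monotonicity computation that produces the maximum of $|T|^2$ on the circle $|w|=r$. A more geometric alternative would be to observe that $T$ sends each disk $\{|w|\leq r\}$ onto a disk whose point of greatest modulus is the image of the point on $|w|=r$ lying on the ray from $0$ through $-a$; this yields the same bound without differentiation.
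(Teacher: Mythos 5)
Your argument is correct and is the standard proof of Pick's form of the Schwarz lemma: conjugate by the automorphism $\varphi_a$, apply the classical Schwarz lemma to $\omega=\varphi_a\circ f$, and maximize $|\varphi_a^{-1}|$ over the circle $|w|=r$ (your derivative-in-$t$ computation, with sign governed by $(1-r^2)(1-|a|^2)$, is right). The paper itself states this lemma without proof, citing Krantz, and the cited proof is essentially yours; the only point you leave implicit is the degenerate case where $f$ is a unimodular constant (permitted since only $|f|\le 1$ is assumed), for which the inequality holds trivially.
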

\begin{lem}\cite{7a,26a}\label{lem2} Suppose $f$ is analytic in $\D$ with $|f(z)|\leq1$, then we have 
\beas \frac{\left|f^{(n)}(z)\right|}{n!}\leq \frac{1-|f(z)|^2}{(1-|z|)^{n-1}(1-|z|^2)}\quad\text{and}\quad |a_n|\leq 1-|a_0|^2\quad\text{for}\quad n\geq 1\quad \text{and}\quad |z|<1.\eeas\end{lem}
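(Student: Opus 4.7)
The plan is to prove the derivative bound directly and then obtain the coefficient bound as an immediate corollary. Indeed, setting $z=0$ in the derivative estimate would give $|a_n|=|f^{(n)}(0)/n!|\leq (1-|f(0)|^2)/((1-0)^{n-1}(1-0))=1-|a_0|^2$ for every $n\geq 1$, so the entire content of the lemma reduces to the pointwise derivative inequality.

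For the derivative bound I would combine the Schwarz--Pick lemma with a reduction to a function vanishing at the origin. The plan is to fix $z\in\D$, set $c=f(z)$, and introduce the disk automorphisms $\phi_z(w)=(w+z)/(1+\overline{z}w)$ (which sends $0$ to $z$) and $\psi_c(\zeta)=(\zeta-c)/(1-\overline{c}\zeta)$ (which sends $c$ to $0$). Then $F:=\psi_c\circ f\circ\phi_z$ lies in $\mathcal{B}$ with $F(0)=0$, so Schwarz's lemma yields $F(w)=wF_*(w)$ with $F_*\in\mathcal{B}$, and hence $|F^{(m)}(0)/m!|\leq 1$ for every $m\geq 1$ by Cauchy's estimates. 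For $n=1$ this already yields the Schwarz--Pick inequality $|f'(z)|\leq (1-|f(z)|^2)/(1-|z|^2)$, which matches the claimed bound.

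For $n\geq 2$, I would invert via $f=\psi_c^{-1}\circ F\circ\phi_z^{-1}$ and apply Fa\`a di Bruno's formula at $w=z$, using $\phi_z^{-1}(z)=0$ and $F(0)=0$. This expresses $f^{(n)}(z)/n!$ as a sum over set partitions of $\{1,\dots,n\}$, weighted by the explicit Taylor coefficients $(\psi_c^{-1})^{(k)}(0)/k!=(1-|c|^2)(-\overline{c})^{k-1}$ and $(\phi_z^{-1})^{(k)}(z)/k!=\overline{z}^{k-1}/(1-|z|^2)^k$ for $k\geq 1$. The common factor $1-|c|^2=1-|f(z)|^2$ then pulls cleanly out of every summand, and taking absolute values should leave a sum in the real variables $|c|$, $|z|$, and $(1-|z|^2)^{-1}$ that can be reorganized to match the denominator $(1-|z|)^{n-1}(1-|z|^2)$.

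The main obstacle will be precisely this combinatorial estimate. A naive bound $|F^{(m)}(0)/m!|\leq 1$ yields only $(1-|f(z)|^2)/(1-|z|^2)^n$ with a partition sum that is too large, so the sharper form claimed in the lemma seems to require invoking the refined estimate $|F^{(m)}(0)/m!|\leq 1-|F'(0)|^2$ for $m\geq 2$ (that is, a Schur--type coefficient inequality applied to $F_*\in\mathcal{B}$), together with a careful regrouping so that the powers of $|z|$ telescope correctly. An alternative route, which avoids Fa\`a di Bruno entirely, is to apply Cauchy's integral formula directly to $f(w)-f(z)$ on a circle $|w-z|=\rho$ with $\rho\uparrow 1-|z|$, using Lemma~\ref{lem1} to bound $|f(w)|$ in terms of $|f(z)|$ and $|\phi_z(w)|$ so as to extract the factor $(1-|f(z)|^2)$ from the integrand before optimizing $\rho$.
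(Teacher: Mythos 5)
A preliminary remark: the paper gives no proof of this lemma --- it is quoted from Dai--Pan and Ruscheweyh \cite{7a,26a} --- so your attempt can only be measured against the standard arguments in those sources. Against that standard there is a genuine gap, and it sits exactly where you flag it. Your plan makes the entire lemma rest on the derivative estimate (deducing the coefficient bound by setting $z=0$ is fine), but your proof of the derivative estimate is both incomplete and logically inverted. For $m\ge 2$ the Cauchy estimate $|F^{(m)}(0)/m!|\le 1$ is, as you note, too weak, and the refinement you propose to ``invoke'' --- $|F^{(m)}(0)/m!|\le 1-|F'(0)|^2$ via $F(w)=wF_*(w)$ and a Schur-type bound on the coefficients of $F_*$ --- \emph{is} the second assertion of the lemma, applied to $F_*$. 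That inequality is not a consequence of the Schwarz--Pick lemma when the index is $\ge 2$; it needs its own proof (the standard one is Wiener's averaging trick: $\frac1n\sum_{j=0}^{n-1}f(\epsilon^j\zeta)=h(\zeta^n)$ with $\epsilon=e^{2\pi i/n}$, $h\in\mathcal B$, $h(0)=a_0$ and $h'(0)=a_n$, which reduces the claim to the first-coefficient case). Since you never supply that proof, yet also plan to obtain the coefficient bound as a corollary of the derivative bound, the logical order must be reversed: prove $|a_n|\le 1-|a_0|^2$ for arbitrary members of $\mathcal B$ first. Your fallback route also fails: combining Cauchy's formula on $|w-z|=\rho$ with the two-point Schwarz--Pick bound $|f(w)-f(z)|\le(1-|f(z)|^2)\sigma/(1-|f(z)|\sigma)$, where $\sigma=|w-z|/|1-\bar z w|$, and letting $\rho\uparrow 1-|z|$ yields $(1+|f(z)|)/(1-|z|)^n$ --- the factor $1-|f(z)|^2$ is cancelled by the denominator $1-|f(z)|\sigma$ as $\sigma\to 1$ --- and optimizing over $\rho$ instead still overshoots the stated constant by a factor of order $n$.

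Once the coefficient inequality is established, the efficient path (Ruscheweyh's) avoids Fa\`a di Bruno entirely: the partition sum you are struggling with is an artifact of post-composing with $\psi_c$. Set $F=f\circ\phi_z$ only; then $\hat F(m)$, the $m$-th Taylor coefficient of $F$ at the origin, satisfies $|\hat F(m)|\le 1-|F(0)|^2=1-|f(z)|^2$ for all $m\ge 1$, and from $f(z+u)=F\bigl(u/(1-|z|^2-\bar z u)\bigr)$ one reads off the exact identity
\[
\frac{f^{(n)}(z)}{n!}=\frac{1}{(1-|z|^2)^n}\sum_{m=1}^{n}\binom{n-1}{m-1}\bar z^{\,n-m}\,\hat F(m),
\]
so the triangle inequality and the binomial theorem give the bound $(1-|f(z)|^2)(1+|z|)^{n-1}/(1-|z|^2)^n$, which equals the stated one because $(1-|z|^2)^n=(1-|z|)^n(1+|z|)^n$.
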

\section{Main results and their proofs}
 The following result is the sharp improved version of the Bohr inequality for Ces\'aro operator on the space of bounded analytic function.
\begin{theo}\label{T1}
 If $f\in\mathcal{B}$ such that $f(z)=\sum_{n=0}^\infty a_n z^n$ for $z\in\D$, then 
 \bea\label{j1}\left|\mathcal{C}f(z)\right|+\left|\mathcal{C}f'(z)\right|\phi_1(r)+\sum_{k=2}^{\infty}\left|a_k\right|\phi_k(r)\leq \frac{1}{r}\log \frac{1}{1-r}\eea
 for $|z|= r\leq r_1\leq R(\approx 0.493411)$, where $r_1\in(0,1)$ is the unique positive root of the equation
 \beas&& -\frac{1}{r}\log (1+r)+\frac{\left(-r-\log (1-r)\right)}{2r}\left(-\frac{1}{r}\log (1-r)+\frac{1}{r}\log\left(1+r\right)+\frac{2}{1+r}\right)\\
 &&+2\left(\frac{r^2}{1-r}+r+2+\frac{2}{r}\log(1-r)\right)=0.\eeas 
 The number $r_1$ is the best possible.
\end{theo}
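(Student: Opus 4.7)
Set $a = |a_0|$ and $r = |z|$; by rotation invariance, assume $a_0 = a \geq 0$ and test at $z = r$. The plan is to majorise each summand on the left of (\ref{j1}) by an explicit function of $a$ and $r$ that vanishes at $a = 1$, so the left-hand side is dominated by $\phi_0(r) + \Phi(r,a)$ for some $\Phi$ with $\Phi(r,1) = 0$; the number $r_1$ will then be identified as the largest $r$ for which $\Phi(r,a) \leq 0$ on $[0,1]$.

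For the first summand, Pick's lemma (Lemma~\ref{lem1}) gives $|f(tz)| \leq (a+tr)/(1+atr)$, and combined with $|1-tz| \geq 1-tr$ and a partial-fraction integration, one obtains
\[
|\mathcal{C}f(z)| \leq \int_0^1\frac{a+tr}{(1+atr)(1-tr)}\,dt = \phi_0(r) - \frac{1-a}{ar}\log(1+ar).
\]
For the second summand, I bound $|f'(tz)|$ via Lemma~\ref{lem2}---using $|f'(w)| \leq (1-|f(w)|^2)/(1-|w|^2)$ combined with Pick's lemma, or the coefficient estimate $|a_k| \leq 1-a^2$---and integrate against $1/(1-tz)$ to obtain $|\mathcal{C}f'(z)| \leq M(r,a)$ satisfying $M(r,1) = 0$ and $M(r,a) = (1-a)\widetilde M(r) + O((1-a)^2)$. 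For the tail, Lemma~\ref{lem2} gives $|a_k| \leq 1-a^2$, so
\[
\sum_{k\geq 2}|a_k|\phi_k(r) \leq (1-a^2)\Sigma(r),\qquad \Sigma(r) = \frac{1}{r}\int_0^r \frac{x^2}{(1-x)^2}\,dx = \frac{r^2}{1-r} + r + 2 + \frac{2}{r}\log(1-r).
\]

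Assembling the three bounds, the left-hand side of (\ref{j1}) is majorised by $\phi_0(r) + (1-a)\Psi(r,a) + O((1-a)^2)$ with
\[
\Psi(r,1) = -\frac{\log(1+r)}{r} + \widetilde M(r)\,\phi_1(r) + 2\Sigma(r).
\]
A suitably chosen estimate on $|\mathcal{C}f'(z)|$ produces $\widetilde M(r) = \frac{1}{2r}\log\frac{1+r}{1-r} + \frac{1}{1+r}$, for which $\Psi(r,1) = 0$ coincides exactly with the displayed equation of the theorem; its unique positive root defines $r_1$. Extending $\Phi(r, a) \leq 0$ from a neighbourhood of $a = 1$ to all of $[0,1]$ (for $r \leq r_1$) will be carried out by monotonicity/concavity of $\Phi(r,\cdot)$ and evaluation at the endpoint $a = 0$. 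Sharpness is tested on $f_a(z) = (a+z)/(1+az)$, which satisfies $|a_k| = a^{k-1}(1-a^2)$ for $k \geq 1$, and for which direct integration of geometric series gives
\[
\mathcal{C}f_a(r) = \phi_0(r) - \frac{1-a}{ar}\log(1+ar),\qquad \mathcal{C}f_a'(r) = (1-a)\left[\frac{1}{r(1+a)}\log\frac{1+ar}{1-r} + \frac{a}{1+ar}\right];
\]
these attain equality in the first two bounds. Expanding in $\varepsilon = 1-a$ then gives the left-hand side of (\ref{j1}) equal to $\phi_0(r) + \varepsilon\,\Psi(r,1) + O(\varepsilon^2)$, which exceeds $\phi_0(r)$ for $r > r_1$ and $a$ sufficiently close to $1$, confirming optimality. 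The hardest step will be choosing the bound on $|\mathcal{C}f'(z)|$ so that $\widetilde M(r)$ takes the required form dictated by the extremal family, together with the global (not merely infinitesimal) verification of $\Phi(r,a) \leq 0$ on $[0,1]$; both reduce to careful estimates on combinations of rational and logarithmic functions of $r$.
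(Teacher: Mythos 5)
Your overall architecture (reduce to $a=|a_0|$, show the excess over $\frac1r\log\frac1{1-r}$ has the form $(1-a)\,\Phi(r,a)$ with $\Phi(r,1)$ reproducing the displayed equation, sharpness via $f_a(z)=(a+z)/(1+az)$ as $a\to1^-$) matches the paper, and your bounds for $|\mathcal{C}f(z)|$ and for the tail $\sum_{k\ge2}|a_k|\phi_k(r)$ are exactly the ones used there. But the central step --- ``a suitably chosen estimate on $|\mathcal{C}f'(z)|$ produces $\widetilde M(r)=\frac{1}{2r}\log\frac{1+r}{1-r}+\frac{1}{1+r}$'' --- is a genuine gap: no such \emph{separate} estimate exists. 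The Schwarz--Pick bound $|f'(w)|\le(1-|f(w)|^2)/(1-|w|^2)$ needs a \emph{lower} bound on $|f(w)|$ to become an upper bound depending only on $a$ and $|w|$, and the best available lower bound $(a-|w|)/(1-a|w|)$ yields, after integration, a first-order coefficient $(2-r)/(1-r)^2$, strictly larger than your $\widetilde M(r)$ for $r>0$. This is not an artifact of a weak estimate: for $f(z)=(a-z)/(1-az)$, which also has $|f(0)|=a$, one computes $|\mathcal{C}f'(r)|=(1-a^2)\int_0^1 (1-tr)^{-1}(1-atr)^{-2}\,dt$, whose $(1-a)$-coefficient as $a\to1^-$ is exactly $(2-r)/(1-r)^2>\widetilde M(r)$. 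So $|\mathcal{C}f'(z)|$ alone is \emph{not} maximized by $(a+z)/(1+az)$ among functions with the same $|a_0|$, and any bound of the form you posit is false.

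The paper's proof avoids this by estimating $|\mathcal{C}f(z)|+|\mathcal{C}f'(z)|\phi_1(r)$ \emph{jointly}: under a single integral the integrand becomes $x+\lambda(1-x^2)$ with $x=|f(tz)|$ and $\lambda=\phi_1(r)/(1-t^2r^2)$, and this is increasing in $x\in[0,1]$ precisely when $\lambda\le\frac12$; only then may one substitute the Pick upper bound for $|f(tz)|$ into \emph{both} terms simultaneously. That condition is the sole source of the auxiliary radius $R\approx0.493411$ in the statement, defined by $2(-r-\log(1-r))=r(1-r^2)$, which your outline never accounts for --- a sign that the separate-bounds route cannot reproduce the theorem. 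Repairing your argument essentially forces you into the paper's joint treatment. The remaining global step (showing $\phi(a,r)\le\phi(1,r)$ for all $a\in[0,1]$, which the paper achieves through a lengthy monotonicity-in-$a$ analysis) is also only a promissory note in your outline, and note that the maximum is attained at $a=1$, not at $a=0$; but that part is a matter of computation rather than a wrong idea.
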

\begin{proof} Let $g(z)=f'(z)$ with $g(z)=\sum_{n=0}^\infty b_n z^n$ for $z\in\D$. Then, we have $b_n=(n+1)a_{n+1}$ for $n\geq 0$.
From (\ref{j2}), we have
\beas \mathcal{C}g(z):&=&\sum_{k=0}^\infty \frac{1}{k+1}\left(\sum_{n=0}^k b_n\right) z^k=\int_{0}^1\frac{g(t z)}{1-t z} d t,\\[2mm]{\it i.e., }\quad
\mathcal{C}f'(z):&=&\sum_{k=0}^\infty \frac{1}{k+1}\left(\sum_{n=0}^k (n+1)a_{n+1}\right) z^k=\int_{0}^1\frac{f'(t z)}{1-t z} d t.\eeas
From (\ref{j3}), we deduce that
\beas \phi_1(r)&=&\frac{1}{r}\int_{0}^{r}\frac{x}{1-x}dx=\frac{-r-\log(1-r)}{r}\\[2mm]\text{and}\quad
 \phi_2(r)&=&\frac{1}{r}\int_{0}^{r}\frac{x^2}{1-x}dx=-\frac{1}{r}\log(1-r)-1-\frac{r}{2}.\eeas
 Since $f(z)=\sum_{n=0}^\infty a_n z^n$ is analytic in $\D$ and $|f(z)|\leq 1$ in $\D$, we have $|a_n|\leq 1-|a_0|^2$ for $n\geq 1$.
 Therefore,
\beas\sum_{k=2}^{\infty}\left|a_k\right|\phi_k(r)\leq \left(1-a^2\right)\sum_{k=2}^{\infty}\phi_k(r)&=&\frac{\left(1-a^2\right)}{r}\int_{0}^{r}\frac{x^2}{(1-x)^2}dx\\[2mm]
&=&\left(1-a^2\right)\left(\frac{r^2}{1-r}-2\phi_2(r)\right).\eeas
In view of \textrm{Lemmas \ref{lem1}} and \ref{lem2}, we have 
\bea \label{j5}&&\left|\mathcal{C}f(z)\right|+\left|\mathcal{C}f'(z)\right|\phi_1(r)\nonumber\\[2mm]
&=&\left|\int_{0}^1\frac{f(t z)}{1-t z} d t\right|+\left|\int_{0}^1\frac{f'(t z)}{1-t z} d t\right|\phi_1(r)\nonumber\\[2mm]
&\leq&\int_{0}^1\left[\frac{|f(t z)|}{1-tr}+\frac{\left(1-|f(tz)|^2\right)\phi_1(r)}{\left(1-tr\right)\left(1-t^2r^2\right)}\right]dt\nonumber\\[2mm]
&=&\int_{0}^1\left[|f(tz)|+\frac{\phi_1(r)}{\left(1-t^2r^2\right)}\left(1-|f(tz)|^2\right)\right]\frac{dt}{\left(1-tr\right)}\nonumber\\[2mm]
&\leq&\int_{0}^1\left[\frac{a+tr}{1+atr}+\frac{\phi_1(r)}{\left(1-t^2r^2\right)}\left(1-\left(\frac{a+tr}{1+atr}\right)^2\right)\right]\frac{dt}{\left(1-tr\right)}\\[2mm]
&=&\int_{0}^1\left[\frac{a+tr}{1+atr}+\left(1-\left(\frac{a+tr}{1+atr}\right)^2\right)\frac{-r-\log (1-r)}{r\left(1-t^2r^2\right)}\right]\frac{dt}{\left(1-tr\right)}\nonumber\eea
\beas&=&\frac{1}{r}\log \frac{1}{1-r}-\frac{1-a}{ar}\log (1+ar)\nonumber\\
&&+\frac{\left(1-a^2\right)\left(-r-\log (1-r)\right)}{r}\int_{0}^1\frac{dt}{(1-tr)\left(1+atr\right)^2}\nonumber\\[2mm]
&=&\frac{1}{r}\log \frac{1}{1-r}-\frac{1-a}{ar}\log (1+ar)\nonumber\\[2mm]
&&+\frac{\left(1-a^2\right)\left(-r-\log (1-r)\right)}{r\left(a+1\right)^2}\left[-\frac{1}{r}\log (1-r)+\frac{1}{r}\log\left(1+ar\right)+\frac{a(a+1)}{1+ar}\right],\nonumber\eeas
where the inequality (\ref{j5}) holds for 
\beas 0\leq \frac{\phi_1(r)}{\left(1-t^2r^2\right)}=\frac{\left(-r-\log (1-r)\right)}{\left(r(1-t^2r^2)\right)}\leq \frac{1}{2}\quad\text{for}\quad t\in[0,1].\eeas 
It is easy to see that 
\beas \frac{\pa}{\pa t}\frac{\left(-r-\log (1-r)\right)}{r(1-t^2r^2)}=\frac{\left(-r-\log (1-r)\right)}{r}\frac{2r^2t}{(1-t^2r^2)^2}\geq 0,\eeas 
which shows that $\left(-r-\log (1-r)\right)/\left(r(1-t^2r^2)\right)$ is a monotonically increasing function of $t\in[0,1]$ and thus, we have 
\beas0\leq \frac{\left(-r-\log (1-r)\right)}{r(1-r^2)}\leq \frac{1}{2}\quad\text{holds}\quad r\leq R(\approx 0.493411),\eeas
where $R$ is the positive root of the equation $2\left(-r-\log (1-r)\right)=r\left(1-r^2\right)$.
Therefore,
\beas \left|\mathcal{C}f(z)\right|+\left|\mathcal{C}f'(z)\right|\phi_1(r)+\sum_{k=2}^{\infty}\left|a_k\right|\phi_k(r)\leq \frac{1}{r}\log \frac{1}{1-r}+\psi(a,r),\eeas
where $\psi(a,r)=(1-a)\phi(a,r)$ with
\beas \phi(a,r)&=&\frac{\left(-r-\log (1-r)\right)}{r\left(a+1\right)}\left(-\frac{1}{r}\log (1-r)+\frac{1}{r}\log\left(1+ar\right)+\frac{a(a+1)}{1+ar}\right)\\
&&+\left(1+a\right)\left(\frac{r^2}{1-r}-2\phi_2(r)\right)-\frac{1}{ar}\log (1+ar).\eeas
It is not difficult to show that 
\bea\label{r1} \frac{r^2}{1-r}-2\phi_2(r)=\frac{r^2}{1-r}+r+2+\frac{2}{r}\log(1-r)\geq 0\quad\text{for}\quad r\in(0,1).\eea
Since $\log(1+r)-(r-r^2/2)$ and $\log(1-r)-(r+r^2/2)$ are increasing functions of $r\in[0,1]$, it follows that
\bea\label{r2}\frac{1}{r}\log(1+r)\geq 1-\frac{r}{2}\quad\text{and}\quad -\log(1-r)\geq r+\frac{r^2}{2}\quad\text{for}\quad r\in[0,1]\eea
Using (\ref{r1}), (\ref{r2}) and by differentiating partially $\phi(a,r)$ with respect to $a$, we have 
\bea\frac{\pa}{\pa a}\phi(a,r)&=&-\frac{\left(-r-\log (1-r)\right)}{r(a+1)^2}\left(-\frac{1}{r}\log (1-r)+\frac{1}{r}\log\left(1+ar\right)+\frac{a(a+1)}{1+ar}\right)\nonumber\\
&&+\frac{\left(-r-\log (1-r)\right)}{r\left(a+1\right)}\left(\frac{1}{1+a r}+\frac{1+2a+a^2 r}{(1+a r)^2}\right)+\frac{r^2}{1-r}+r+2\nonumber\\
&&+\frac{2}{r}\log(1-r)+\frac{1}{a^2 r}\log(1+ar)-\frac{1}{a(1+ar)}\nonumber\\[2mm]
&=&-\frac{\left(-r-\log (1-r)\right)}{r(a+1)^2}\left(-\frac{1}{r}\log (1-r)+\frac{1}{r}\log\left(1+ar\right)\right)\nonumber\\
&&+\frac{(-r-\log (1-r))(2+a+ar)}{r(a+1)(1+ar)^2}+\frac{r^2}{1-r}+r+2+\frac{2}{r}\log(1-r)\nonumber\\
&&+\frac{1}{a^2 r}\log(1+ar)-\frac{1}{a(1+ar)}\nonumber\\[2mm]
\label{r3}&=&A_1(a,r)+A_2(a,r)+A_3(a,r)+\frac{r^2}{1-r}+r+2+\frac{2}{r}\log(1-r),\eea
where
\beas A_1(a,r)&=&\frac{1}{a^2 r}\log(1+ar)-\frac{1}{a(1+ar)}\geq \frac{1}{a}-\frac{r}{2}-\left(\frac{1}{a}-\frac{r}{1+ar}\right)=\frac{r(1-ar)}{2(1+ar)},\\[2mm]
A_2(a,r)&=&\frac{(-r-\log (1-r))(2+a+ar)}{r(a+1)(1+ar)^2}\quad\text{and}\\[2mm]
A_3(a,r)&=&-\frac{\left(-r-\log (1-r)\right)}{r(a+1)^2}\left(-\frac{1}{r}\log (1-r)+\frac{1}{r}\log\left(1+ar\right)\right).\eeas
The presence of two-variable terms in the expression $\frac{\pa}{\pa a}\phi(a,r)$ makes it challenging to demonstrate that it is greater than or equal to zero. Consequently, our 
objective is to express $\frac{\pa}{\pa a}\phi(a,r)$ in terms of a single variable.\\[2mm]
Let $B_1(a,r):=r(1-ar)/(2(1+ar))$. Differentiating partially $B_1(a,r)$ with respect to $a$, we obtain
\beas \frac{\pa }{\pa a}B_1(a,r)&=&-\frac{r^2(1-a r)}{2(1+a r)^2} - \frac{r^2}{2(1+a r)}\leq 0,\eeas 
which shows that $B_1(a,r)$ is a monotonically decreasing function of $a\in[0,1]$, and hence, we have 
\bea\label{r4} A_1(a,r)\geq B_1(a, r)\geq B_1(1,r)=\frac{r(1-r)}{2(1+r)}.\eea
Differentiating partially $A_2(a,r)$ with respect to $a$, we obtain
\beas \frac{\pa }{\pa a}A_2(a,r)&=&\frac{(1+r)(-r-\log(1-r))}{(1+a)r(1+a r)^2}-\frac{2(2+a+a r)(-r-\log(1-r))}{(1+a)(1+a r)^3}\\
&&-\frac{(2+a+a r)(-r-\log(1-r))}{(1+a)^2r(1+a r)^2}\\[2mm]
&=&-\frac{(2 a^2 r^2+2a^2 r+a r^2+7a r+3r+1)(-r -\log(1-r))}{(1+a)^2r(1+a r)^3}\leq 0.\eeas
Therefore, $A_2(a,r)$ is a monotonically decreasing function of $a\in[0,1]$, and it follows that 
\bea\label{r5} A_2(a,r)\geq A_2(1, r)=\frac{(3+r)(-r-\log(1-r))}{2r(1+r)^2}.\eea
Differentiating partially $A_3(a,r)$ with respect to $a$, we obtain 
\beas \frac{\pa }{\pa a}A_3(a,r)&=&-\frac{(-r-\log(1-r))}{(1+a)^2r(1+a r)} + \frac{2(-r -\log(1 - r))(-\log(1-r)+\log(1+a r))}{(1+a)^3 r^2}\\[2mm]
&=&\frac{(-r-\log(1-r))\left(-r-a r + (2+2 a r)\left( \log(1+a r)-\log(1- r)\right)\right)}{(1+a)^3 r^2 (1+a r)}.\eeas
From (\ref{r2}), we have 
\beas
(2+2 a r)\left( \log(1+a r)-\log(1- r)\right)&\geq& (2+2ar)\left(ar+r+\frac{r^2}{2}-\frac{a^2r^2}{2}\right)\\[2mm]
&=&2r+2 a r+r^2+2 a r^2+a^2 r^2+a r^3-a^3 r^3.\eeas
Therefore, we have 
\beas \frac{\pa }{\pa a}A_3(a,r)\geq \frac{(-r-\log(1-r))\left(r+a r+r^2+2 a r^2+a^2 r^2+a r^3-a^3 r^3\right)}{(1+a)^3 r^2 (1+a r)}\geq 0,\eeas
which shows that $A_3(a,r)$ is a monotonically increasing function of $a\in[0,1]$, and it follows that 
\bea\label{r6}A_3(a,r)\geq A_3(0, r)=\frac{(-r - \log(1-r))\log(1-r)}{r^2}.\eea
From (\ref{r3}), (\ref{r4}), (\ref{r5}) and (\ref{r6}), we have 
\beas \frac{\pa}{\pa a}\phi(a,r)&\geq &\frac{r(1-r)}{2(1+r)}+\frac{(3+r)(-r-\log(1-r))}{2r(1+r)^2}+\frac{(-r - \log(1-r))\log(1-r)}{r^2}\\
&&+\frac{r^2}{1-r}+r+2+\frac{2}{r}\log(1-r)\\[2mm]
&=&\frac{1}{2r^2(1-r)(1+r)^2}B_2(r),\eeas
where 
\beas B_2(r)&=&\left(r^2 + 9 r^3 - 3 r^5 + r^6\right)-\left(r-4 r^2+r^3+2 r^4\right)\log(1-r)\\
&&-\left(2+2 r-2 r^2-2 r^3\right) (\log(1-r))^2.\eeas
\begin{figure}[H]
\centering
\includegraphics[scale=0.7]{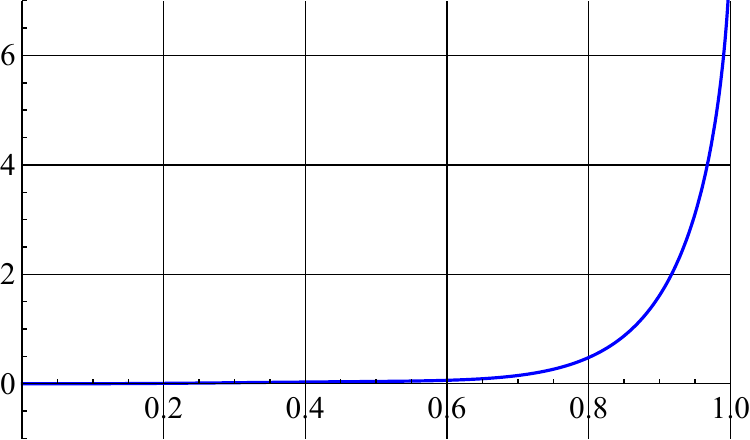}
\caption{The graph of $B_2(r)$ in $(0,1)$}
\label{fig1}
\end{figure}
Let $B_3(r)=r^2 + 9 r^3 - 3 r^5 + r^6$ and $B_4(r)=\left(r-4 r^2+r^3+2 r^4\right)\log(1-r)+\left(2+2 r-2 r^2-2 r^3\right) (\log(1-r))^2$.
\begin{figure}[H]
\centering
\includegraphics[scale=0.7]{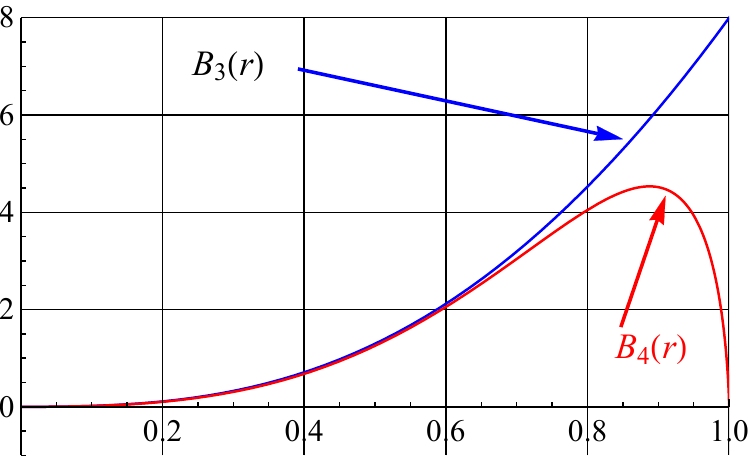}
\caption{The graphs of $B_3(r)$ and $B_4(r)$ in $(0,1)$}
\label{fig2}
\end{figure}
In Figures \ref{fig1} and \ref{fig2}, we have illustrated the graphical representation of $B_i(r)$ for $i=2,3,4$.
Differentiating $B_2(r)$ with respect to $r$, we have 
\beas B_2'(r)&=&3 r+24 r^2-2 r^3-15 r^4+6 r^5+(3+16 r +r^2 - 8 r^3) \log(1-r)\\
&&+(-2+4r+6r^2)(\log(1-r))^2.\eeas
\begin{figure}[H]
\centering
\includegraphics[scale=0.7]{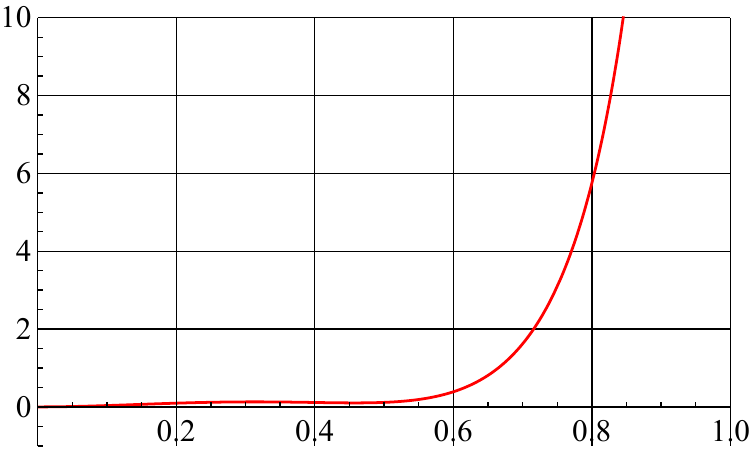}
\caption{The graph of $B_2'(r)$ in $(0,1)$}
\label{fig3}
\end{figure}
Using numerical computation, one can find that the equation $B_2'(r)=0$ has no root in $(0,1)$ with $B_2'(0)=0$ and $B_2'(1/2)=1/4 (26 - 41\log2 + (6\log2)^2)\approx 0.11592$. 
Therefore, $B_2'(r)\geq 0$ for $r\in[0,1]$, as illustrated in Figure \ref{fig3}, which shows that $B_2(r)$ is a monotonically increasing function of $r\in[0,1]$. Thus, we have $B_2(r)\geq B_2(0)=0$, {\it i.e.,} $\frac{\pa}{\pa a}\phi(a,r)\geq 0$.
Therefore, $\phi(a,r)$ is a monotonically increasing function of $a\in[0,1]$, and it follows that
\beas \phi(a,r)\leq \phi(1,r)=\phi(r),\eeas
where 
\bea\label{r8}\phi(r)&=&-\frac{\log (1+r)}{r}-\frac{1}{2}\left(-\frac{\log (1-r)}{r}+\frac{\log\left(1+r\right)}{r}+\frac{2}{1+r}\right)+2\left(\frac{r^2}{1-r}+r+2\right)\nonumber\\
&&+\frac{\log(1-r)}{r}\left(4+\frac{\log (1-r)}{2r}-\frac{\log\left(1+r\right)}{2r}-\frac{1}{1+r}\right).\eea
Differentiating $\phi(r)$ with respect to $r$, we have
\bea\label{r7} \phi'(r)&=&-\frac{1}{2}\left(\frac{1}{r^2}\log (1-r)+\frac{1}{r(1-r)}-\frac{1}{r^2}\log\left(1+r\right)+\frac{1}{r(1+r)}-\frac{2}{(1+r)^2}\right)\nonumber\\
&&+\left(-\frac{\log(1-r)}{r^2}-\frac{1}{r(1-r)}\right)\left(4+\frac{\log (1-r)}{2r}-\frac{\log (1+r)}{2r}-\frac{1}{1+r}\right)\nonumber\\
&&+\frac{\log(1-r)}{r}\left(-\frac{\log (1-r)}{2r^2}-\frac{1}{2r(1-r)}+\frac{\log\left(1+r\right)}{2r^2}-\frac{1}{2r(1+r)}\right.\nonumber\\
&&\left.+\frac{1}{(1+r)^2}\right)+2\left(\frac{2r}{1-r}+\frac{r^2}{(1-r)^2}+1\right)-\frac{1}{r(1+r)}+\frac{1}{r^2}\log (1+r)\nonumber\\[2mm]
&=& \frac{-5+7 r^2+6 r^3}{r(1-r)^2(1+r)^2}+\frac{1}{r^3}\log\left(1+r\right)\left(\frac{3}{2}r+\log(1-r)+\frac{r}{2(1-r)}\right)\nonumber\\
&&-\frac{1}{r^3}\log\left(1-r\right)\left(\frac{9}{2}r+\log(1-r)-\frac{r}{2(1+r)}+\frac{r}{1-r}-\frac{r^2}{(1+r)^2}\right).\eea
Let 
\beas g_1(r)&=&\frac{3}{2}r+\log(1-r)+\frac{r}{2(1-r)}\quad\text{and}\\
g_2(r)&=&\frac{9}{2}r+\log(1-r)-\frac{r}{2(1+r)}+\frac{r}{1-r}-\frac{r^2}{(1+r)^2}.\eeas
It is easy to see that 
\beas g_1'(r)=\frac{2 - 4 r + 3 r^2}{2 (1-r)^2}\geq 0\quad\text{and}\quad g_2'(r)=\frac{8 + 8 r - 3 r^2 - 17 r^3 + 11 r^4 + 9 r^5}{2(1-r)^2(1+r)^3}\geq 0\eeas
for $r\in[0,1]$. Using Sturm's theorem, it is easy to ascertain that the equation $8 + 8 r - 3 r^2 - 17 r^3 + 11 r^4 + 9 r^5=0$ has no positive real root.
Therefore, $g_1(r)$ and $g_2(r)$ are monotonically increasing functions of $r\in[0,1]$ and it follows that $g_1(r)\geq g_1(0)=0$ and $g_2(r)\geq g_2(0)=0$ for $r\in[0,1]$.
Using (\ref{r2}) in (\ref{r7}), we have 
\beas \phi'(r)&\geq&\frac{-5+7 r^2+6 r^3}{r(1-r)^2(1+r)^2}+\frac{(2-r)}{2r^2}\left(\frac{3}{2}r+\log(1-r)+\frac{r}{2(1-r)}\right)\nonumber\\
&&+\frac{(2+r)}{2r^2}\left(\frac{9}{2}r+\log(1-r)-\frac{r}{2(1+r)}+\frac{r}{1-r}-\frac{r^2}{(1+r)^2}\right)\\[2mm]
&=&\frac{4+5 r-8 r^2+2 r^3+10 r^4+3 r^5}{2r(1-r)^2(1+r)^2} + \frac{2}{r^2}\log(1 - r)\\
&=&\frac{1}{r^2}g_3(r),\eeas
where $g_3(r)=\left(4r+5 r^2-8 r^3+2 r^4+10 r^5+3 r^6\right)/\left(2(1-r)^2(1+r)^2\right) + 2\log(1- r)$. Differentiating $g_3(r)$ with respect to $r$, we have 
\beas g_3'(r)=\frac{3 r - 2 r^2 + 13 r^3 + 19 r^4 + 7 r^5 - 5 r^6 - 3 r^7}{(1-r)^3(1+r)^3}\geq 0\quad\text{for}\quad r\in[0,1].\eeas
Therefore, $g_3(r)$ is a monotonically increasing function of $r\in[0,1)$ and it follows that $g_3(r)\geq g_3(0)=0$ for $r\in[0,1]$.
Hence, we have $\phi'(r)\geq 0$ for $r\in[0,1]$ and it follows that $\phi(r)$ is a monotonically increasing function of $r\in[0,1]$. It is evident that 
\beas \lim_{r\to 0^+}\phi(r)=-1\quad\text{and}\quad \lim_{r\to 1^-}\phi(r)=\infty.\eeas   
Therefore, the equation $\phi(r)=0$ has exactly one root $r_1\in[0,1]$. 
We claim that $r_1\leq R (\approx 0.493411)$. For $r>R$, we have 
\bea\label{r9}\frac{\left(-r-\log (1-r)\right)}{r(1-r^2)}> \frac{1}{2},\quad{i.e.,}\quad\frac{1}{r}\log\frac{1}{1-r}>\frac{3-r^2}{2}.\eea
From (\ref{r2}), (\ref{r8}) and (\ref{r9}), we have
\beas \phi(r)&=&\frac{1}{2}\left(-1+\frac{1}{r}\log\frac{1}{1-r}\right)\left(-\frac{1}{r}\log (1-r)+\frac{1}{r}\log\left(1+r\right)+\frac{2}{1+r}\right)\\
&&+2\left(\frac{r^2}{1-r}+r+2+\frac{2}{r}\log(1-r)\right)-\frac{1}{r}\log (1+r)\\[2mm]
&>&-\frac{1}{r}\log (1+r)+\frac{1-r^2}{4}\left(\frac{3-r^2}{2}+\frac{1}{r}\log\left(1+r\right)+\frac{2}{1+r}\right)\\
&&+2\left(\frac{r^2}{1-r}+r+2+\frac{2}{r}\log(1-r)\right)\\[2mm]
&=&\frac{3+r^2}{4r}\left(\frac{(1-r^2)r}{3+r^2}\left(\frac{3-r^2}{2}+\frac{2}{1+r}\right)-\log (1+r)\right)\\
&&+2\left(\frac{r^2}{1-r}+r+2+\frac{2}{r}\log(1-r)\right).\eeas
Let
\beas g_4(r)&=&\frac{(1-r^2)r}{3+r^2}\left(\frac{3-r^2}{2}+\frac{2}{1+r}\right)-\log (1+r).\eeas
Differentiating $g_4(r)$ with respect to $r$, we see that
\beas g_4'(r)&=&\frac{39-45 r-7 r^2+43 r^3+13 r^4 - 11 r^5+3 r^6-3 r^7}{2(1-r)(3+r^2)^2}.\eeas
By employing Sturm's theorem, it is a straightforward process to ascertain that the equation $F(r):=39-45 r-7 r^2+43 r^3+13 r^4 - 11 r^5+3 r^6-3 r^7=0$ has exactly one positive real root, which lies in 
the interval $(1,2)$. This is evident from the fact that $F(1)=32$ and $F(2)=-71$. Therefore, $F(r)>0$ for $r\in[0,1]$, as illustrated in Figure \ref{fig4}.
\begin{figure}[H]
\centering
\includegraphics[scale=0.6]{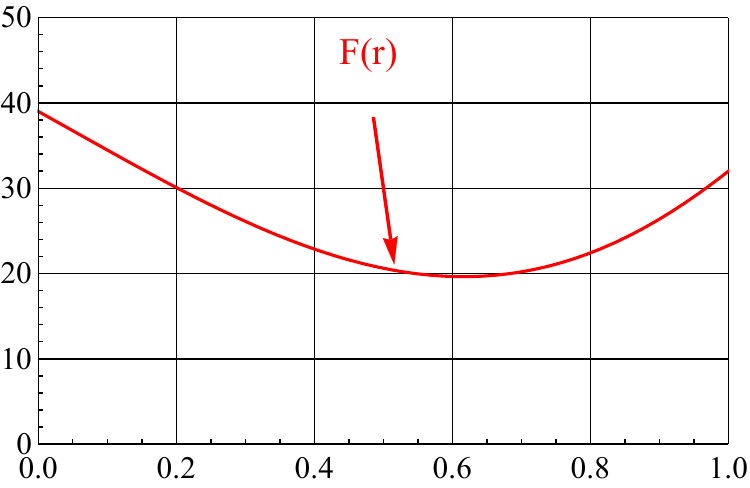}
\caption{The graph of the polynomial $F(r)$ in $[0,1]$}
\label{fig4}
\end{figure}
Thus, $g_4'(r)\geq 0$ for $r\in[0,1]$, 
which shows that $g_4(r)$ is a monotonically increasing function of $r\in(0,1)$ and it follows that $g_4(r)\geq g_4(0)=0$, {\it i.e.,} $\phi(r)>0$ for $r>R$.
Thus, we have $r_1\leq R$ and hence, we have $\psi(a,r)\leq 0$ for $0\leq r\leq r_1\leq R$,  where $r_1$ is the positive root of the equation $\phi(r)=0$ defined in (\ref{r8}). 
This completes the desired inequality (\ref{j1}). \\[2mm]
\indent To prove that the radius $r_1$ is optimal, let $a\in(0,1]$ and consider the function 
\beas f(z)=\frac{a+z}{1+az}=a+\left(1-a^2\right)\sum\limits_{n=1}^{\infty}(-a)^{n-1}z^n,\quad z\in\D.\eeas
Therefore, $f'(z)=\left(1-a^2\right)/(1+az)^2$ for $z\in\D$.
For the above function with $z=r$, we have
\beas 
\left|\mathcal{C}f(z)\right|&=&\int_{0}^1\frac{a+tr}{(1-tr)(1+atr)}dt=\frac{1}{r}\log\frac{1}{1-r}-\frac{1-a}{ar}\log(1+ar)\quad\text{and}\\[2mm]
\left|\mathcal{C}f'(z)\right|&=&\int_{0}^1\frac{1-a^2}{(1-tr)(1+atr)^2}dt\\[1.5mm]
&=&\frac{\left(1-a\right)}{\left(a+1\right)}\left(-\frac{\log (1-r)}{r}+\frac{\log (1+r)}{r}+\frac{a(a+1)}{1+ar}\right).\eeas
Since $a_0=a$ and $a_k=\left(1-a^2\right)(-a)^{k-1}$ for $k\geq 1$, we have
\beas \sum_{k=2}^{\infty}\left|a_k\right|\phi_k(r)&=&\left(1-a^2\right)\sum_{k=2}^{\infty}a^{k-1}\phi_k(r)\\
&=&\frac{1-a^2}{r}\sum_{k=2}^{\infty}a^{k-1}\int_{0}^r\frac{x^k}{1-x}dx\\
&=&\frac{1-a^2}{r}\int_{0}^r\frac{1}{1-x}\left(\sum_{k=2}^{\infty}a^{k-1}x^k\right)dx\\
&=&\frac{\left(1-a^2\right)a}{r}\int_{0}^r\frac{x^2}{(1-x)(1-ax)}dx\\
&=&\frac{\left(1+a\right)a}{r}\int_{0}^rx^2\left(\frac{1}{1-x}-\frac{a}{1-ax}\right)dx\\
&=& a\left(1+a\right)\phi_2(r)-(1+a)\phi_2(ar).\eeas
 Now
 \beas \phi_2(ar)&=&\phi_2\left(r-(1-a)r\right)=\sum_{k=2}^\infty \frac{\left(1-(1-a)\right)^k r^k}{k+1}\\
 &=&\sum_{k=2}^\infty \frac{\left(1-k(1-a)+O\left((1-a)^2\right)\right) r^k}{k+1}\\
 &=&\phi_2(r)-(1-a)\sum_{k=2}^\infty\frac{kr^k}{k+1}+O\left((1-a)^2r^2\right)\\
  &=&\phi_2(r)-(1-a)\sum_{k=2}^\infty\left(r^k-\frac{r^k}{k+1}\right)+O\left((1-a)^2r^2\right)\eeas
\beas &=&\phi_2(r)+(1-a)\phi_2(r)-(1-a)\frac{r^2}{1-r}+O\left((1-a)^2r^2\right)\\
 &=&(2-a)\phi_2(r)-(1-a)\frac{r^2}{1-r}+O\left((1-a)^2r^2\right).\eeas
 Thus, we have
 \beas \sum_{k=2}^{\infty}\left|a_k\right|\phi_k(r)&=&\left(a(1+a)-(1+a)(2-a)\right)\phi_2(r)+(1-a^2)\frac{r^2}{1-r}\\
 &&+(1+a)O\left((1-a)^2r^2\right)\\
&=&-2\left(1-a^2\right)\phi_2(r)+(1-a^2)\frac{r^2}{1-r}+(1+a)O\left((1-a)^2r^2\right).\eeas
Therefore,
\beas \left|\mathcal{C}f(z)\right|+\left|\mathcal{C}f'(z)\right|\phi_1(r)+\sum_{k=2}^{\infty}\left|a_k\right|\phi_k(r)= \frac{1}{r}\log \frac{1}{1-r}+(1-a)Q(a,r),\eeas
where
\beas Q(a,r)&=&\frac{\left(-r-\log (1-r)\right)}{r\left(a+1\right)}\left(-\frac{1}{r}\log (1-r)+\frac{1}{r}\log\left(1+ar\right)+\frac{a(a+1)}{1+ar}\right)\\
&&-2\left(1+a\right)\phi_2(r)+(1+a)\frac{r^2}{1-r}+(1+a)O\left((1-a)r^2\right)-\frac{\log (1+ar)}{ar}.\eeas
Letting $a\to 1^-$ in $Q(a,r)$, then we find that
\beas \lim_{a\to1^-} Q(a,r)&=&\frac{\left(-r-\log (1-r)\right)}{2r}\left(-\frac{1}{r}\log (1-r)+\frac{1}{r}\log\left(1+r\right)+\frac{2}{1+r}\right)\\
&&-4\phi_2(r)+\frac{2r^2}{1-r}-\frac{1}{r}\log (1+r)=\phi(r)>0\quad\text{for}\quad r>r_1.\eeas
This proves the sharpness of the inequality (\ref{j1}).\end{proof}
\begin{theo}\label{T2}
 If $f\in\mathcal{B}$ such that $f(z)=\sum_{n=0}^\infty a_n z^n$ for $z\in\D$ and $\omega\in\mathcal{B}_m$ for some $m\in\N$. Then, we have 
 \bea\label{j7}\left|\mathcal{C}f(\omega(z))\right|+\left|\mathcal{C}f'(\omega(z))\right|\phi_1(r^m)+\sum_{k=2}^{\infty}\left|a_k\right|\phi_k(r)\leq \frac{1}{r^m}\log \frac{1}{1-r^m}\eea
 for $|z|= r\leq R_{m,1}\leq R_m$, where $R_m\in(0,1)$ is the positive root of the equation 
 \beas 2\left(-r^m-\log (1-r^m)\right)=r^m\left(1-r^{2m}\right)\eeas 
 and $R_{m,1}\in(0,1)$ is the unique positive root of the equation
 \beas&&\frac{\left(-r^m-\log (1-r^m)\right)}{2r^m}\left\{-\frac{1}{r^m}\log (1-r^m)+\frac{1}{r^m}\log\left(1+r^m\right)+\frac{2}{1+r^m}\right\}\\
&&+2\left(\frac{r^2}{1-r}+r+2+\frac{2}{r}\log(1-r)\right)-\frac{1}{r^m}\log (1+r^m)=0.\eeas 
 The number $R_m$ is the best possible.
\end{theo}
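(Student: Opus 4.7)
The plan is to adapt the proof of Theorem \ref{T1} by using the Schwarz lemma estimate $|\omega(z)|\leq |z|^m=r^m$ (obtained by applying the usual Schwarz lemma to $\omega(z)/z^m\in\mathcal{B}$) to replace the effective radius in the two Cesàro operator terms by $r^m$. Starting from
\begin{equation*}
|\mathcal{C}f(\omega(z))|\leq \int_0^1 \frac{|f(t\omega(z))|}{1-tr^m}\,dt,\qquad |\mathcal{C}f'(\omega(z))|\leq \int_0^1 \frac{|f'(t\omega(z))|}{1-tr^m}\,dt,
\end{equation*}
I would invoke Lemma \ref{lem2} to bound $|f'(t\omega(z))|\leq (1-|f(t\omega(z))|^2)/(1-t^2r^{2m})$, and then Lemma \ref{lem1} together with the monotonicity of $s\mapsto (a+s)/(1+as)$ on $[0,1]$ to get $|f(t\omega(z))|\leq (a+tr^m)/(1+atr^m)$, where $a=|a_0|$. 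The coefficient sum $\sum_{k\geq 2}|a_k|\phi_k(r)$ is still estimated in the original variable $r$ using $|a_k|\leq 1-a^2$, giving $\sum_{k\geq 2}|a_k|\phi_k(r)\leq (1-a^2)(r^2/(1-r)-2\phi_2(r))$.

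Combining these bounds yields an integrand of the form
\begin{equation*}
\frac{1}{1-tr^m}\left[\,|f(t\omega(z))|+\frac{\phi_1(r^m)}{1-t^2r^{2m}}\bigl(1-|f(t\omega(z))|^2\bigr)\right],
\end{equation*}
which is monotonically increasing in $|f(t\omega(z))|$ precisely when $\phi_1(r^m)/(1-t^2r^{2m})\leq 1/2$. As in Theorem \ref{T1}, this factor is monotonically increasing in $t\in[0,1]$, so the worst case $t=1$ produces the constraint $2(-r^m-\log(1-r^m))\leq r^m(1-r^{2m})$, i.e., $r\leq R_m$. Under this constraint, evaluating the resulting integrals exactly as in Theorem \ref{T1} but with $r$ replaced by $r^m$ in the Cesàro factors produces an upper bound of the form $\tfrac{1}{r^m}\log\tfrac{1}{1-r^m}+(1-a)\phi_m(a,r)$, where $\phi_m(a,r)$ carries mixed arguments $r$ and $r^m$. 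It then remains to show $\phi_m(a,r)$ is monotonically increasing in $a\in[0,1]$ and to locate the unique positive root $R_{m,1}$ of $\Phi_m(r):=\phi_m(1,r)=0$. The decomposition of $\partial_a\phi_m$ into pieces analogous to $A_1$, $A_2$, $A_3$ from Theorem \ref{T1}, combined with the elementary inequalities $\tfrac{1}{x}\log(1+x)\geq 1-\tfrac{x}{2}$ and $-\log(1-x)\geq x+\tfrac{x^2}{2}$ applied with argument $r$ or $r^m$ as appropriate, should close the monotonicity step.

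The main obstacle is the simultaneous dependence of $\phi_m(a,r)$ on the two scales $r$ and $r^m$, which prevents a clean single-variable reduction as in the proof of Theorem \ref{T1}. In particular, the polynomial-logarithmic inequalities that establish positivity of $B_2(r)$, $g_1(r)$, $g_2(r)$, $g_3(r)$, and $g_4(r)$ there must be re-examined to confirm that they persist under the mixed $r$, $r^m$ substitution for every $m\in\mathbb{N}$; the Sturm-theorem root counts may need to be re-run for the resulting $m$-parametrized polynomials. The inclusion $R_{m,1}\leq R_m$ would then follow by the same strategy as $r_1\leq R$: for $r>R_m$, the inequality $2(-r^m-\log(1-r^m))>r^m(1-r^{2m})$ should force $\Phi_m(r)>0$, so by monotonicity of $\Phi_m$ its root cannot exceed $R_m$.

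For the sharpness claim, I would test the combination $\omega(z)=z^m$ and $f_a(z)=(a+z)/(1+az)$, for which $|\omega(z)|=r^m$ saturates the Schwarz bound and $f_a$ saturates Pick's lemma. The two Cesàro integrals then evaluate explicitly as functions of $r^m$, while the tail $\sum_{k\geq 2}|a_k|\phi_k(r)$ is handled via the Taylor expansion of $\phi_2(ar)$ around $a=1$ mirroring the computation at the end of the proof of Theorem \ref{T1}. Letting $a\to 1^-$ should produce a limiting discrepancy equal to $\Phi_m(r)$, which is strictly positive for $r>R_{m,1}$, thereby confirming optimality of the radius.
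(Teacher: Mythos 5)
Your proposal follows essentially the same route as the paper's proof: the Schwarz bound $|\omega(z)|\leq r^m$ feeding into Pick's lemma, the constraint $\phi_1(r^m)/(1-t^2r^{2m})\leq 1/2$ at $t=1$ giving $R_m$, the decomposition of $\partial_a\Phi_m$ into analogues of $A_1,A_2,A_3$, the identical argument for $R_{m,1}\leq R_m$, and the same extremal pair $\omega(z)=z^m$, $f(z)=(a+z)/(1+az)$. The mixed-scale obstacle you flag is real and is precisely where the paper itself resorts to numerical verification of $B_{2,m}(r)\geq 0$, so your caution about re-running those positivity checks for each $m$ matches what the published argument actually does.
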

\begin{proof} Suppose that $f\in\mathcal{B}$ and $\omega\in\mathcal{B}_m$. Let $|a_0|=a\in[0,1]$. In view of the Cauchy Schwarz lemma and \textrm{Lemma \ref{lem1}}, we have
\beas |f(v)|\leq \frac{a+|v|}{1+a|v|},\quad v\in\D\quad\text{and}\quad |\omega(z)|\leq |z|^m,\quad z\in\D.\eeas  
Therefore, we have
\beas |f(\omega(z))|\leq \frac{a+|\omega(z)|}{1+a|\omega(z)|}\leq \frac{a+|z|^m}{1+a|z|^m},\quad z\in\D.\eeas
By using similar arguments as in the proof of \textrm{Theorem \ref{T1}}, we have
\bea &&\left|\mathcal{C}f\left(\omega(z)\right)\right|+\left|\mathcal{C}f'\left(\omega(z)\right)\right|\phi_1(r^m)\nonumber\\
&=&\left|\int_{0}^1\frac{f(t \omega(z))}{1-t\omega(z)} d t\right|+\left|\int_{0}^1\frac{f'(t \omega(z))}{1-t \omega(z)} d t\right|\phi_1(r^m)\nonumber\\
&\leq&\int_{0}^1\left[\frac{|f\left(t \omega(z)\right)|}{1-tr^m}+\frac{\left(1-|f\left(t \omega(z)\right)|^2\right)\phi_1(r^m)}{\left(1-tr^m\right)\left(1-t^2r^{2m}\right)}\right]dt\nonumber\\
&=&\int_{0}^1\left[\left|f\left(t \omega(z)\right)\right|+\frac{\phi_1(r^m)}{\left(1-t^2r^{2m}\right)}\left(1-|f\left(t \omega(z)\right)|^2\right)\right]\frac{dt}{\left(1-tr^m\right)}\nonumber\\
\label{k1}&\leq&\int_{0}^1\left[\frac{a+tr^m}{1+atr^m}+\frac{\phi_1(r^m)}{\left(1-t^2r^{2m}\right)}\left\{1-\left(\frac{a+tr^m}{1+atr^m}\right)^2\right\}\right]\frac{dt}{\left(1-tr^m\right)}\\[2mm]
&=&\frac{1}{r^m}\log \frac{1}{1-r^m}-\frac{1-a}{ar^m}\log (1+ar^m)\nonumber\\
&&+\frac{\left(1-a^2\right)\left(-r^m-\log (1-r^m)\right)}{r^m\left(a+1\right)^2}\left(-\frac{\log (1-r^m)}{r^m}+\frac{\log\left(1+ar^m\right)}{r^m}+\frac{a(a+1)}{1+ar^m}\right),\nonumber\eea
where the inequality (\ref{k1}) hold for 
\beas 0\leq \frac{\phi_1(r)}{\left(1-t^2r^{2m}\right)}=\frac{\left(-r^m-\log (1-r^m)\right)}{r^m\left(1-t^2r^{2m}\right)}\leq \frac{1}{2}\quad{for}\quad t\in[0,1].\eeas 
Since $\left(-r^m-\log (1-r^m)\right)/\left(r^m(1-t^2r^{2m})\right)$ is a monotonically increasing function of $t$ and thus, we have 
\beas&& \frac{\left(-r^m-\log (1-r^m)\right)}{r^m(1-r^{2m})}\leq \frac{1}{2}\quad\text{for}\quad r\leq R_m\eeas
where $R_m\in(0,1)$ is the positive root of the equation $2\left(-r^m-\log (1-r^m)\right)=r^m\left(1-r^{2m}\right)$.
Therefore,
\beas\left|\mathcal{C}f\left(\omega(z)\right)\right|+\left|\mathcal{C}f'\left(\omega(z)\right)\right|\phi_1(r^m)+\sum_{k=2}^{\infty}\left|a_k\right|\phi_k(r)\leq \frac{1}{r^m}\log \frac{1}{1-r^m}+\Psi_m(a,r),\eeas
where $\Psi_m(a,r)=(1-a)\Phi_m(a,r)$ with
\beas\Phi_m(a,r)&=&\frac{\left(-r^m-\log (1-r^m)\right)}{r^m\left(a+1\right)}\left\{-\frac{1}{r^m}\log (1-r^m)+\frac{1}{r^m}\log\left(1+ar^m\right)+\frac{a(a+1)}{1+ar^m}\right\}\\
&&+\left(1+a\right)\left\{\frac{r^2}{1-r}-2\phi_2(r)\right\}-\frac{1}{ar^m}\log (1+ar^m).\eeas
Using (\ref{r1}), (\ref{r2}) and by differentiating partially $\Phi_m(a,r)$ with respect to $a$, we have 
\be\label{tr3}\frac{\pa}{\pa a}\Phi_m(a,r)=A_{1,m}(a,r)+A_{2,m}(a,r)+A_{3,m}(a,r)+\frac{r^2}{1-r}+r+2+\frac{2}{r}\log(1-r),\ee
where
\beas A_{1,m}(a,r)&=&\frac{1}{a^2 r^m}\log(1+ar^m)-\frac{1}{a(1+ar^m)}\\
&\geq&\frac{1}{a}-\frac{r^m}{2}-\left(\frac{1}{a}-\frac{r^m}{1+ar^m}\right)=\frac{r^m(1-ar^m)}{2(1+ar^m)},\\[2mm]
A_{2,m}(a,r)&=&\frac{(-r^m-\log (1-r^m))(2+a+ar^m)}{r^m(a+1)(1+ar^m)^2}\quad\text{and}\\[2mm]
A_{3,m}(a,r)&=&-\frac{\left(-r^m-\log (1-r^m)\right)}{r^m(a+1)^2}\left(-\frac{1}{r^m}\log (1-r^m)+\frac{1}{r^m}\log\left(1+ar^m\right)\right).\eeas
Using the method of proof of \textrm{Theorem \ref{T1}}, we obtain 
\bea\label{tr4} A_{1,m}(a,r)&\geq& \frac{r^m(1-r^m)}{2(1+r^m)},\\[2mm]
\label{tr5} A_{2,m}(a,r)&\geq&\frac{(3+r^m)(-r^m-\log(1-r^m))}{2r^m(1+r^m)^2}\\[2mm]\text{and}\quad
\label{tr6}A_{3,m}(a,r)&\geq&\frac{(-r^m - \log(1-r^m))\log(1-r^m)}{r^{2m}}.\eea
From (\ref{tr3}), (\ref{tr4}), (\ref{tr5}) and (\ref{tr6}), we have 
\beas \frac{\pa}{\pa a}\Phi_m(a,r)&\geq &\frac{r^m(1-r^m)}{2(1+r^m)}+\frac{(3+r^m)(-r^m-\log(1-r^m))}{2r^m(1+r^m)^2}\\
&&+\frac{(-r^m- \log(1-r^m))\log(1-r^m)}{r^{2m}}+\frac{r^2}{1-r}+r+2+\frac{2}{r}\log(1-r)\\[2mm]
&=&\frac{B_{2,m}(r)}{2r^{2m}(1-r)(1+r^m)^2},\eeas
where 
\beas B_{2,m}(r)=B_{3,m}(r)+B_{4,m}(r)\log\left(1-r\right)+B_{5,m}(r)\log(1-r^m)+B_{6,m}(r)(\log\left(1-r^m\right))^2\eeas
with 
\beas B_{3,m}(r)&=&r^{2 m}+r^{1+2m}+8r^{3 m}-4 r^{1+3 m}+4 r^{4 m}-2 r^{1+4 m}-r^{5 m}+r^{1+5 m},\\[2mm]
B_{4,m}(r)&=&4 r^{2m-1}+8 r^{3m-1}+4 r^{4 m-1}-4 r^{2 m}-8 r^{3 m}-4 r^{4 m},\\[2mm]
B_{5,m}(r)&=&-5 r^{m}+5 r^{1+m}-5 r^{2 m}+5 r^{1+2 m}-2 r^{3 m}+2 r^{1+3 m},\\[2mm]
B_{6,m}(r)&=&-2+2 r-4 r^{m}+4 r^{1+m}-2 r^{2 m}+2 r^{1+2 m}.\eeas
By numerical computation, one can find that for fixed $m\in\Bbb{N}$, the equation $B_{2,m}(r)=0$ has no root in $(0,1)$ with $\lim_{r\to 1^-} B_{2,m}(r)=8$ and $B_2(2,m)(0)=0$.
Therefore, $B_{2,m}(r)\geq 0$, {\it i.e.,} $\frac{\pa}{\pa a}\Phi_m(a,r)\geq 0$.
Hence, we have $\Phi_m(a,r)$ is a monotonically increasing function of $a\in[0,1]$ and thus
\beas \Phi_m(a,r)\leq \Phi_m(1,r)=\Phi_m(r),\eeas
where 
\bea\label{j6} \Phi_m(r)&=&\frac{\left(-r^m-\log (1-r^m)\right)}{2r^m}\left(-\frac{\log\left(1-r^m\right)}{r^m}+\frac{\log\left(1+r^m\right)}{r^m}+\frac{2}{1+r^m}\right)\nonumber\\
&&+2\left(\frac{r^2}{1-r}+r+2+\frac{2}{r}\log(1-r)\right)-\frac{1}{r^m}\log (1+r^m).\eea
We now proceed to rearrange the function $\Phi_m(r)$ as
 \beas\Phi_m(r)&=&-\frac{1}{2}\left(-\frac{1}{r^m}\log \left(1-r^m\right)+\frac{1}{r^m}\log\left(1+r^m\right)+\frac{2}{1+r^m}\right)\\
&&+\frac{1}{r^m}\log(1-r^m)\left(\frac{1}{2r^m}\log \left(1-r^m\right)-\frac{1}{2r^m}\log\left(1+r^m\right)-\frac{1}{1+r^m}\right)\\
&&+2\left(\frac{r^2}{1-r}+r+2+\frac{2}{r}\log(1-r)\right)-\frac{1}{r^m}\log\left(1+r^m\right).\eeas
Differentiating $\Phi_m(r)$ with respect to $r$, we have
\bea\label{t3} \Phi_m'(r)
&=& \frac{-4-m+6 r+2 m r-mr^2- 8 r^m-4r^{2 m}+12r^{1+m}+6 r^{1+2 m}}{r (1-r)^2\left(1+r^m\right)^2}\nonumber\\
&&+\frac{\log\left(1+r^m\right)}{r^{m+1}}\left(\frac{3m}{2}+\frac{m}{2(1-r^m)}+\frac{m}{r^m}\log(1-r^m)\right)\nonumber\\
&&-\frac{\log\left(1-r^m\right)}{r^{m+1}}\left(\frac{m}{2}+\frac{m}{1-r^m}-\frac{m}{2\left(1+r^m\right)}-\frac{mr^{m}}{\left(1+r^m\right)^2}\right.\nonumber\\
&&\left.+\frac{m\log\left(1-r^m\right)}{r^m}\right)-\frac{4}{r^2}\log(1-r).\eea
Let 
\beas g_{1,m}(r)&=&\frac{3mr^m}{2}+\frac{mr^m}{2(1-r^m)}+m\log(1-r^m)\quad\text{and}\\
g_{2,m}(r)&=&\frac{mr^m}{2}+\frac{mr^m}{1-r^m}-\frac{mr^m}{2\left(1+r^m\right)}-\frac{mr^{2m}}{\left(1+r^m\right)^2}+m\log\left(1-r^m\right).\eeas
It is easy to see that 
\beas&& g_{1,m}'(r)=\frac{m^2 r^{m-1} (2 - 4 r^m + 3 r^{2m})}{2(1-r^m)^2}\geq 0\\\text{and}
&&g_{2,m}'(r)=\frac{m^2 r^{3 m-1} (13 - r^m + 3 r^{2 m}+r^{3 m})}{2(1-r^m)^2 (1+r^m)^3}\geq 0\eeas
for $r\in[0,1]$ and $m\in\Bbb{N}$. Therefore, $g_{1,m}(r)$ and $g_{2,m}(r)$ are both monotonically increasing functions of $r\in[0,1]$ and it follows that 
\beas g_{1,m}(r)\geq g_{1,m}(0)=0\quad\text{and}\quad g_{2,m}(r)\geq g_{2,m}(0)=0\quad\text{for}\quad r\in[0,1].\eeas
Using (\ref{r2}) in (\ref{t3}), we have 
\beas \Phi_m'(r)&\geq&\frac{1}{r^{m+1}}\left(\frac{g_{3,m}(r)}{2 (1-r)^2(1-r^m)(1 + r^m)^2} + 2m\log(1- r^m)\right)=\frac{1}{r^{m+1}}g_{4,r}(r),\eeas
where 
\beas
g_{3,m}(r)&=&r^{5m} (m - 2 m r + m r^2) + r^{4 m} (-m + 2 m r - m r^2 - 4 r^3)\\
&& +r^{3 m} (m - 2 m r + m r^2 - 4 r^3)+ r^{2 m} (9 m - 18 m r + 9 m r^2 + 4 r^3)\\
&& +r^m (4 m - 8 m r + 4 m r^2 + 4 r^3).\eeas 
Let  $g_{4,m}(r)=(g_{3,m}(r)/(2 (1-r)^2(1-r^m)(1 + r^m)^2))+ 2m\log(1- r^m)$.
Differentiating $g_{4,m}(r)$ with respect to $r$, we have 
\beas g_{4,m}'(r)=\frac{g_{5,m}(r)}{r(1-r)^3(1-r^m)^2 (1+r^m)^3}\geq 0\quad\text{for}\quad r\in(0,1)\quad\text{and}\quad m\in\Bbb{N},\eeas
where
\beas g_{5,m}(r)&=&r^{7m}(-m^2 + 3 m^2 r - 3 m^2 r^2 + m^2 r^3) + r^{6 m} (6 r^3 + 2 m r^3 - 2 r^4 - 2 m r^4)\\
&&+ r^{5 m} (5 m^2 - 15 m^2 r + 15 m^2 r^2 + 6 r^3 + 2 m r^3 -5 m^2 r^3 - 2 r^4 - 2 m r^4)  \\
&&+ r^{4 m} (6 m^2 - 18 m^2 r + 18 m^2 r^2 - 12 r^3 - 4 m r^3 - 6 m^2 r^3 + 4 r^4 + 4 m r^4)\\
&&+r^{3 m} (m^2 - 3 m^2 r + 3 m^2 r^2 - 12 r^3 - 4 m r^3 - m^2 r^3 +4 r^4 + 4 m r^4)\\
&&+ r^{2 m} (3 m^2 - 9 m^2 r + 9 m^2 r^2 + 6 r^3 + 2 m r^3 - 3 m^2 r^3 -2 r^4 - 2 m r^4)\\
&&+ r^m (6 r^3 + 2 m r^3 - 2 r^4 - 2 m r^4).\eeas
Therefore, $g_{4,m}(r)$ is a monotonically increasing function of $r$ and it follows that $g_{4,m}(r)\geq g_{4,m}(0)=0$ for $r\in[0,1]$ and $m\in\Bbb{N}$.
Hence, we have $\Phi_m'(r)\geq 0$ for $r\in[0,1]$ and it follows that $\Phi_m(r)$ is a monotonically increasing function of $r$. Also, 
\beas \lim_{r\to 0^+}\Phi_m(r)=-1\quad\text{and}\quad \lim_{r\to 1^-}\Phi_m(r)=\infty.\eeas   
Therefore, the equation $\Phi_m(r)=0$ has exactly one root $R_{m,1}\in(0,1)$. Hence, $\Psi_m(a,r)\leq 0$ for $0\leq r\leq R_{m,1}$,  where $R_{m,1}$ is the positive root of the equation $\Phi_m(r)=0$ defined in (\ref{j6}).  \\[2mm]
\indent We claim that $R_{m,1}\leq R_m$. For $r>R_m$, we have 
\bea\label{j8}\frac{\left(-r^m-\log (1-r^m)\right)}{r^m(1-r^{2m})}> \frac{1}{2},\quad{i.e.,}\quad-1+\frac{1}{r^m}\log\frac{1}{1-r^m}>\frac{1-r^{2m}}{2}.\eea
From (\ref{r2}), (\ref{j6}) and (\ref{j8}), we have
\beas \Phi_m(r)&=&\frac{1}{2}\left(-1+\frac{1}{r^m}\log\frac{1}{1-r^m}\right)\left(-\frac{\log\left(1-r^m\right)}{r^m}+\frac{\log\left(1+r^m\right)}{r^m}+\frac{2}{1+r^m}\right)\\&&+2\left(\frac{r^2}{1-r}+r+2+\frac{2}{r}\log(1-r)\right)-\frac{1}{r^m}\log (1+r^m)\\[2mm]
&>&-\frac{1}{r^m}\log (1+r^m)+\frac{1-r^{2m}}{4}\left(\frac{3-r^{2m}}{2}+\frac{1}{r^m}\log\left(1+r^m\right)+\frac{2}{1+r^m}\right)\\
&&+2\left(\frac{r^2}{1-r}+r+2+\frac{2}{r}\log(1-r)\right)\\[2mm]
&=&\frac{3+r^{2m}}{4r^m} g_{6,m}(r)+2\left(\frac{r^2}{1-r}+r+2+\frac{2}{r}\log(1-r)\right),\eeas
where
\beas g_{6,m}(r)=\frac{(1-r^{2m})r^m}{3+r^{2m}}\left\{\frac{3-r^{2m}}{2}+\frac{2}{1+r^m}\right\}-\log(1-r^m).\eeas
Differentiating $g_{6,m}(r)$ with respect to $r$, we see that
\beas g_{6,m}'(r)=\frac{mr^{m-1}\left(39 - 45 r^m - 7 r^{2m}+43 r^{3m} + 13 r^{4 m}- 11 r^{5m}+3 r^{6m}- 3 r^{7m}\right)}{2(1-r^m)(3+r^{2m})^2}\geq 0\eeas
for $r\in[0,1]$ and $m\in\Bbb{N}$. Therefore, $g_{6,m}(r)$ is a monotonically increasing function of $r\in[0,1]$ and its follows that $g_{6,m}(r)\geq g_{6,m}(0)=0$, {\it i.e.,} $\Phi_m(r)>0$ for $r>R_m$.
Thus, we have $R_{m,1}\leq R_m$ and hence, we have $\Psi_m(a,r)\leq 0$ for $0\leq r\leq R_{m,1}\leq R_m$,  where $R_{m,1}$ is the positive root of the equation $\Phi_m(r)=0$ defined in (\ref{j6}). 
This completes the desired inequality (\ref{j7}). \\[2mm]
\indent To prove that the radius $R_{N,1}$ is optimal, let $a\in(0,1]$ and consider the functions 
\beas \omega(z)=z^m\quad\text{and}\quad f(z)=\frac{a+z}{1+az}=a+\left(1-a^2\right)\sum\limits_{n=1}^{\infty}(-a)^{n-1}z^n,\quad z\in\D.\eeas
Therefore, $f'(z)=\left(1-a^2\right)/(1+az)^2$ for $z\in\D$.
For the above function with $z=r$, we have
\beas 
\left|\mathcal{C}f(z^m)\right|&=&\int_{0}^1\frac{a+tr^m}{(1-tr^m)(1+atr^m)}dt=\frac{1}{r^m}\log\frac{1}{1-r^m}-\frac{(1-a)\log(1+ar^m)}{ar^m},\\[2mm]
\left|\mathcal{C}f'(z^m)\right|&=&\int_{0}^1\frac{1-a^2}{(1-tr^m)(1+atr^m)^2}dt\\[2mm]
&=&\frac{\left(1-a^2\right)}{\left(a+1\right)^2}\left(-\frac{1}{r^m}\log (1-r^m)+\frac{1}{r^m}\log\left(1+ar^m\right)+\frac{a(a+1)}{1+ar^m}\right).\eeas
Therefore,
\beas \left|\mathcal{C}f(z^m)\right|+\left|\mathcal{C}f'(z^m)\right|\phi_1(r^m)+\sum_{k=2}^{\infty}\left|a_k\right|\phi_k(r)= \frac{1}{r^m}\log \frac{1}{1-r^m}+(1-a)Q(a,r),\eeas
where
\beas Q(a,r)&=&\frac{\left(-r^m-\log (1-r^m)\right)}{r^m\left(a+1\right)}\left(-\frac{1}{r^m}\log (1-r^m)+\frac{1}{r^m}\log\left(1+ar^m\right)+\frac{a(a+1)}{1+ar^m}\right)\\
&&-2\left(1+a\right)\phi_2(r)+(1+a)\frac{r^2}{1-r}+(1+a)O\left((1-a)r^2\right)-\frac{\log (1+ar^m)}{ar^m}.\eeas
Letting $a\to 1^-$ in $Q(a,r)$, then we find that
\beas \lim_{a\to1^-} Q(a,r)&=&\frac{\left(-r^m-\log (1-r^m)\right)}{2r^m}\left(-\frac{1}{r^m}\log (1-r^m)+\frac{1}{r^m}\log\left(1+r^m\right)+\frac{2}{1+r^m}\right)\\
&&-4\phi_2(r)+\frac{2r^2}{1-r}-\frac{1}{r^m}\log (1+r^m)=\Phi_m(r)>0\quad\text{for}\quad r>R_{m,1}.\eeas 
This proves the sharpness part of the inequality (\ref{j7}).
\end{proof}
\section{Declarations:}
\noindent\textbf{Acknowledgment:}  The work of the second author is supported by University Grants Commission (IN) fellowship (No. F. 44-1/2018 (SA-III)).\\[1.5mm]
{\bf Conflict of Interest:} The authors declare that there are no conflicts of interest regarding the publication of this paper.\\[1.5mm]
\noindent\textbf{Authors contributions.} All the three authors have made equal contributions in reading, writing, and preparing the manuscript.\\
\noindent\textbf{Data availability statement:}  Data sharing is not applicable to this article as no datasets were generated or analyzed during the current study.

\end{document}